\newtheorem{thm}{Theorem}
\newtheorem{lem}[thm]{Lemma}
\newtheorem{prop}{Proposition}
\newtheorem{conj}[thm]{Conjecture}
\newtheorem{claim}{Claim}
\begin{document}

\title{An Erd\H{o}s-Gallai-type theorem for keyrings with larger number of leaves
\thanks{The work was supported by NNSF of China (No. 11671376), NSF of Anhui Province (No. 1708085MA18), and Anhui Initiative in Quantum Information Technologies (AHY150200).}
}
\author{Xinmin Hou$^a$,\quad Xiaodong Xue$^b$\\
\small $^{a,b}$ Key Laboratory of Wu Wen-Tsun Mathematics\\
\small School of Mathematical Sciences\\
\small University of Science and Technology of China\\
\small Hefei, Anhui 230026, China.
}

\date{}

\maketitle

\begin{abstract}
A keyring is a graph obtained from a cycle  by appending $r\ge0$ leaves to one of its vertices. Sidorenko proved an Erd\H{o}s-Gallai-type theorem: Every graph of order $n$ and size more than $\frac{(k-1)n}{2}$ contains a keyring of size at least $k$ and with $r$ leaves for  $r\le\frac{k-1}2$ (Theorem 1.4, An Erd\H{o}s-Gallai-type theorem for keyrings, Graphs Combin., 2018). In this note, we show that Sidorenko's theorem holds for larger $r$ and so complete the Erd\H{o}s-Gallai-type theorem for keyrings.
\end{abstract}

\section{Introduction}
The graphs considered in this paper are simple and undirected. Let $G=(V,E)$ be a graph.
We call $|V(G)|$ and $e(G)=|E(G)|$ the order and the size of $G$, respectively. For two disjoint sets of vertices $X$ and $Y$, write $E_{G}(X,Y)$ for the set of edges in $G$ with one endpoint in $X$ and the other in $Y$ and denote $e_{G}(X,Y)=|E_{G}(X,Y)|$.
For $u\in{V(G)}$, denote by $N_{G}(u)$ the set of neighbors of $u$ in $G$, and $d_{G}(u)=|N_{G}(u)|$ is the degree of $u$ in $G$. For a subset of vertices $X\subset{V(G)}$, write $G[X]$ for the subgraph of $G$ induced by $X$.

Erd\H{o}s and Gallai~\cite{EG59} proved the following statements.
\begin{thm}[Erd\H{o}s-Gallai Theorem, 1959]\label{THM: EG}
(1) Every graph of order $n$ and size more than $\frac{(k-1)n}{2}$ contains a path of size $k$.

(2) Every graph of order $n$ and size more than $\frac{(k-1)n}{2}$ contains a cycle of size at least $k$.
\end{thm}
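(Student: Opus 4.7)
The plan is to prove part (1) by induction on $n$ and then deduce part (2) from the longest path that part (1) produces. The base cases $n \le k$ are vacuous, since $\binom{n}{2} \le \frac{(k-1)n}{2}$ in that range, leaving no graph to consider. For the inductive step I would split on the minimum degree of $G$: if some vertex $v$ has $d_G(v) \le (k-1)/2$, then $G - v$ has $n-1$ vertices and more than $\frac{(k-1)n}{2} - \frac{k-1}{2} = \frac{(k-1)(n-1)}{2}$ edges, so the induction hypothesis furnishes a path of size $k$ inside $G - v \subseteq G$. The genuinely interesting case is therefore $\delta(G) > (k-1)/2$.

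Assuming $\delta(G) > (k-1)/2$, I would pass (by averaging) to a connected component $C$ still satisfying the edge inequality and pick a longest path $P = v_0 v_1 \dots v_\ell$ in $C$. By maximality, $N_G(v_0) \cup N_G(v_\ell) \subseteq V(P)$, so $\ell \ge d_G(v_0) > (k-1)/2$, giving $\ell \ge \lceil k/2 \rceil$ but not yet $\ell \ge k$. To close this gap I would employ P\'osa-style rotations: for every neighbor $v_i$ of $v_0$ on $P$, reversing the prefix produces a longest path $v_{i-1}, v_{i-2}, \dots, v_0, v_i, \dots, v_\ell$ whose new left endpoint $v_{i-1}$ must again have all of its neighbors inside $V(P)$. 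Iterating rotations builds a large set of candidate left-endpoints, each with more than $(k-1)/2$ neighbors on $V(P)$; a pigeonhole count then forces an edge between two of these endpoints that lies off the current path, producing a cycle covering $V(P)$. In any case with $n > \ell + 1$, attaching a vertex outside $V(P)$ to this cycle yields a path strictly longer than $P$, contradicting maximality and forcing $\ell \ge k$; the complementary case $n = \ell + 1$ gives $\ell = n-1 \ge k$ directly from $e(G) > \frac{(k-1)n}{2}$.

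For part (2), take the path of size $k$ from part (1) and extend it to a longest path $P = v_0 v_1 \dots v_\ell$ with $\ell \ge k$. Because both endpoints have all their neighbors on $P$, set $A = \{i \ge 1 : v_0 v_i \in E(G)\}$ and $B = \{i - 1 : v_\ell v_i \in E(G),\ i \ge 1\}$, both contained in $\{0, 1, \dots, \ell - 1\}$, with $|A| = d_G(v_0)$ and $|B| = d_G(v_\ell)$. Reducing as before to the case $\delta(G) > (k-1)/2$ gives $|A| + |B| > \ell$, hence $A \cap B \ne \emptyset$; any $i \in A \cap B$ yields the cycle $v_0 v_1 \dots v_i v_\ell v_{\ell-1} \dots v_{i+1} v_0$ of length $\ell + 1 \ge k+1$.

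The main obstacle is the rotation/pigeonhole step in the high minimum degree regime of part (1): converting abundant neighborhood information on $P$ into a genuine cycle or a longer path, without sacrificing the sharp factor in $\delta(G) > (k-1)/2$, is the technical heart of the proof, and the accounting has to be tight enough to match the threshold $\frac{(k-1)n}{2}$ exactly rather than a weaker bound such as $\Omega(k)\,n$.
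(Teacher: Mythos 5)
The paper does not prove Theorem~\ref{THM: EG} at all---it is quoted as a classical result from Erd\H{o}s and Gallai~\cite{EG59}---so there is no in-paper proof to compare against; I can only assess your argument on its own terms. Your part (1) is sound in outline: the deletion of a vertex of degree at most $\frac{k-1}{2}$, the passage to a dense connected component, and the longest-path contradiction are exactly the standard route. The P\'osa-rotation step is both unnecessary and, as described, not quite right (an edge between two rotated \emph{left}-endpoints does not by itself close a cycle through all of $V(P)$); but the simple crossing argument you state in part (2) already closes part (1), since under the contrary assumption $\ell\le k-1$ one has $d_G(v_0)+d_G(v_\ell)\ge k\ge\ell+1$, forcing $A\cap B\ne\emptyset$.

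Part (2) has a genuine gap. You assert that $\delta(G)>\frac{k-1}{2}$ gives $|A|+|B|>\ell$ for a longest path of length $\ell\ge k$; but $|A|+|B|=d_G(v_0)+d_G(v_\ell)>k-1$ only yields $|A|+|B|\ge k$, and $\ell$ may be much larger than $k$, so $A\cap B\ne\emptyset$ does not follow. Worse, the conclusion you aim for---a cycle of length $\ell+1$ spanning $V(P)$---is false in general. Take two copies of $K_{k+1}$ sharing a single vertex: this graph has $n=2k+1$ vertices, $k(k+1)>\frac{(k-1)n}{2}$ edges, minimum degree $k$, and a Hamiltonian path, so $\ell=2k$; yet its longest cycle has only $k+1$ vertices, because no cycle can pass through the cut vertex twice. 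Thus the strategy of closing a longest path into a spanning cycle cannot prove part (2). The classical proofs instead run an induction with the sharper threshold $\frac{(k-1)(n-1)}{2}$, which is stable under splitting the graph at a cut vertex, and handle the $2$-connected case separately (e.g.\ via Dirac's theorem that a $2$-connected graph contains a cycle of length at least $\min(n,2\delta)$). Your part (2) needs to be replaced by an argument of that kind.
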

Let $\mathcal{D}_{n,k}$ be the class of all graphs of order $n$ and size more than $\frac{(k-1)n}{2}$. Erd\H{o}s-Gallai Theorem told us every member of $\mathcal{D}_{n,k}$ contains a path of size $k$ and a cycle of size at least $k$.
Paths and cycles are not the only subgraphs contained in members of $\mathcal{D}_{n,k}$. The most famous open problem on this subject was  proposed by Erd\H{o}s and S\'os~\cite{ES65}.
\begin{conj}[Erd\H{o}s-S\'os Conjecture, 1965]
Every graph in $\mathcal{D}_{n,k}$ contains any tree of size $k$.
\end{conj}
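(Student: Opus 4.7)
The plan is to attempt a proof by induction on $k$, preserving the edge-density hypothesis $e(G) > (k-1)n/2$. The base case $k=1$ is immediate, since any graph with at least one edge contains the unique tree $K_2$ of size $1$. For the inductive step, fix a tree $T$ of size $k$ and a graph $G$ on $n$ vertices with $e(G) > (k-1)n/2$, and aim to embed $T$ into $G$.

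The first move is a standard minimum-degree reduction. If $G$ contains a vertex $v$ with $d_G(v) \le (k-1)/2$, then
\[
e(G-v) > \tfrac{(k-1)n}{2} - \tfrac{k-1}{2} = \tfrac{(k-1)(n-1)}{2},
\]
so $G-v \in \mathcal{D}_{n-1,k}$ and the inductive hypothesis applied to $G-v$ produces a copy of $T$. Hence we may assume $\delta(G) > (k-1)/2$, and by passing to a component of largest average degree we may further assume $G$ is connected. The second move is a careful choice of root in $T$: I would pick a vertex whose removal partitions $T$ into subtrees of comparable size (a centroid), or alternatively a leaf-heavy vertex, so that the embedding problem at each step can draw on many free neighbours. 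With the root fixed, one would try a BFS-greedy embedding: map the root to any $x \in V(G)$, then extend vertex by vertex, mapping each new child of $T$ to an unused neighbour of the image of its parent.

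The main obstacle is precisely this greedy embedding, and it is the reason the Erd\H{o}s--S\'os conjecture has remained open for over fifty years. The hypothesis $\delta(G) > (k-1)/2$ supplies only about half the room one needs to extend greedily through all $k$ edges of $T$, so a naive argument fails at the final steps when most neighbours of the current image have already been used. To close the gap one would need a structural dichotomy: either $G$ contains a substantially denser subgraph, where a stronger minimum-degree condition allows a direct greedy embedding (compare Chv\'atal's theorem that $\delta(G) \ge k-1$ suffices for any tree of size $k$), or $G$ is close to the extremal configuration of disjoint $K_k$'s, in which case $T$ embeds inside a single clique. Making this dichotomy rigorous, perhaps through a regularity-lemma analysis in the spirit of the Ajtai--Koml\'os--Simonovits--Szemer\'edi program for large $n$, or through refined degree-sequence arguments, is where I expect the real difficulty to lie; an unconditional proof for all $n$ and every tree $T$ would be a major breakthrough rather than a routine application of the methods available from Theorem~\ref{THM: EG}.
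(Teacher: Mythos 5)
The statement you were asked about is labelled as a conjecture in the paper --- it is the Erd\H{o}s--S\'os Conjecture of 1965, which the paper records as an open problem and does not prove (nor does anyone else, in full generality). So there is no proof in the paper to compare yours against, and your proposal, by your own admission, is not a proof either. To your credit, the parts you do carry out are correct: the base case $k=1$ is fine, and the minimum-degree reduction is the standard one --- deleting a vertex $v$ with $d_G(v)\le (k-1)/2$ gives $e(G-v)>\frac{(k-1)(n-1)}{2}$, so induction on $n$ (note: on $n$, not on $k$ as you announce at the outset; the edge bound you preserve is the one with parameter $k$ fixed and $n$ decreasing) reduces to the case $\delta(G)>\frac{k-1}{2}$. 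You also correctly diagnose why the argument stops there.

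The genuine gap is everything after that reduction. A minimum degree exceeding $\frac{k-1}{2}$ is roughly half of what a greedy embedding of a $k$-edge tree requires, and the ``structural dichotomy'' you invoke to bridge the gap (either a denser subgraph where Chv\'atal-type greedy embedding works, or proximity to a union of cliques) is a description of a research programme, not an argument; no such dichotomy is established or even precisely formulated in your proposal. This is exactly where fifty years of partial results sit --- the paper's own contribution (Theorem~\ref{THM: Main}) handles only the special family of keyrings, and the other known cases (paths, spiders, trees of small diameter, leaf-heavy trees) each required bespoke arguments. If your goal was to prove the conjecture, the proposal fails; if your goal was to explain why it is hard, it is a reasonable summary, but it should not be presented as a proof attempt.
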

There are fruitful partial results in the study of the conjecture, it has been proved that the conjecture holds  for many special families of trees on $k$ vertices, such as
\begin{itemize}
\item[(a)]  paths (Erd\H{o}s and Gallai~\cite{EG59}),
\item[(b)] spiders (Fan~\cite{Fan13}, Fan and Sun~\cite{Fan-Sun07}, Fan, Hong and Liu~\cite{AFC}),
\item[(c)] trees of diameter at most four (McLennan~\cite{Mc05}), and
\item[(d)] trees with a vertex joined to at least $\lfloor\frac{k}{2}\rfloor-1$ vertices of degree one (Sidorenko~\cite{AFA5}).
\end{itemize}
As pointed out in~\cite{K2k}, Tur\'an asked the very problems for which the Erd\H{o}s-Gallai theorem provides the answers.
Along this direction, we consider the  Erd\H{o}s-Gallai theorem of type (2). Except those special families of trees, as we have known, every member of $\mathcal{D}_{n,k}$ contains a subgraph listed in the following.
\begin{itemize}
\item[(i)]
A cycle of size at least $k$   (Erd\H{o}s and Gallai~\cite{EG59}).

\item[(ii)]
A lasso of size at least $k$ (Fan and Sun~\cite{Fan-Sun07}). {\em A lasso } is a graph obtained from a cycle by appending a path to one of its vertices.

\item[(iii)]
A keyring of size at least $k$ and with $r$ leaves for $r\le \frac{k-1}2$ (Sidorenko, Theorem 1.4 in~\cite{K2k}).  A vertex of a graph is {\em a leaf} if it has degree
one. {\em A keyring} is a graph obtained from a cycle by appending $r$ leaves to one of its vertices.
\end{itemize}

Note that a keyring of size $k$ can have leaves up to $k-3$. In this note,  we  show that Sidorenko's result still holds for larger $r$.
\begin{thm}\label{THM: Main}
Let $\frac{k}{2}\leq{r}\leq{k-3}$ and $G$ be a graph on $n$ vertices. If $e(G)>\frac{(k-1)n}{2}$ then $G$ contains a keyring with $r$ leaves and at least $k$ edges.
\end{thm}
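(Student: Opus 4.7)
The plan is to induct on $n$ to reduce to $\delta(G)\ge\lceil k/2\rceil$, and then construct the keyring explicitly from a longest path whose endpoint has sufficiently large degree. For the induction step, if some vertex $v$ has $d_G(v)\le\lfloor(k-1)/2\rfloor$, then $e(G-v)>\frac{(k-1)(n-1)}{2}$ and the induction hypothesis applied to $G-v$ produces the desired keyring inside $G$; so we may assume $\delta(G)\ge\lceil k/2\rceil$. The hypothesis $r\ge k/2$ gives $k-r\le\lceil k/2\rceil\le\delta(G)$, which will ensure that the cycle we construct is long enough.

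Let $P=x_0x_1\cdots x_\ell$ be a longest path in $G$; since $N_G(x_0)\subseteq V(P)$, we have $\ell\ge d_G(x_0)\ge\lceil k/2\rceil$. Set $d=d_G(x_0)$ and $N_G(x_0)=\{x_{i_1},\ldots,x_{i_d}\}$ with $1\le i_1<\cdots<i_d\le\ell$. For any $j$ with $1\le j\le d-r$ and $i_j\ge k-r-1$, the cycle $C_j:=x_0x_1\cdots x_{i_j}x_0$ has length $i_j+1\ge k-r$, and any $r$ of the $d-j\ge r$ off-cycle neighbors $x_{i_{j+1}},\ldots,x_{i_d}$ of $x_0$ form leaves; together these produce a keyring with $r$ leaves and size $(i_j+1)+r\ge k$. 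Because $i_j\ge j$ trivially, choosing $j=d-r$ succeeds whenever $d\ge k-1$: then $i_{d-r}\ge d-r\ge k-r-1$ and $j=d-r\ge k-1-r\ge 2\ge 1$ (using $r\le k-3$).

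It therefore suffices to locate a longest path with an endpoint of degree at least $k-1$. The edge bound $2e(G)>(k-1)n$ guarantees $\Delta(G)\ge k$, so such a vertex certainly exists; the task is to place it at an endpoint of a longest path. The standard tool is P\'osa rotation: for each edge $x_0x_i\in E$, the path $x_{i-1}x_{i-2}\cdots x_0x_ix_{i+1}\cdots x_\ell$ is another longest path, with the new endpoint $x_{i-1}$. Iterating from both endpoints $x_0$ and $x_\ell$ produces a rotation-reachable endpoint set $R$ which, under the edge-density hypothesis, should contain a vertex of degree $\ge k-1$; at such an endpoint the construction of the previous paragraph delivers the keyring.

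\textbf{Main obstacle.} The hardest step is proving that $R$ necessarily contains a vertex of degree $\ge k-1$. The na\"ive degree-sum bound gives $\sum_{v\in R}d_G(v)\le(k-2)|R|$ in the contrary case, which does not by itself contradict $2e(G)>(k-1)n$ unless $R$ exhausts $V(G)$. The resolution requires a refined P\'osa-style expansion argument, together with choosing $P$ so as to maximize $d_G(x_0)+d_G(x_\ell)$ (so that every member of $R$ inherits an upper bound on its degree from $x_0$ or $x_\ell$), to produce a contradiction with the global edge count. This interplay between rotations and edge density is the technical crux and is especially delicate when $r$ is close to $k-3$, the regime where the cycle length $k-r$ collapses to $3$ and the keyring degenerates to a triangle with many pendants.
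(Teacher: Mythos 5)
Your reduction of the problem to the existence of a high-degree vertex sitting at the end of a longest path (or, equivalently, on a suitable cycle) is sound, and your final construction is essentially the same as the paper's: given a vertex $u$ of degree at least $k-1$ on a cycle, you either use $r$ off-cycle neighbours as leaves, or shorten the cycle through on-path neighbours of $u$ and convert the freed neighbours into leaves, and the hypotheses $\frac k2\le r\le k-3$ guarantee the cycle keeps length at least $3$ and the keyring keeps at least $k$ edges. The problem is that the entire weight of the proof rests on the step you label the ``main obstacle,'' and that step is not proved: you only assert that the P\'osa rotation-endpoint set $R$ ``should'' contain a vertex of degree at least $k-1$, and you correctly observe yourself that the degree-sum bound $\sum_{v\in R}d_G(v)\le (k-2)|R|$ in the contrary case does not contradict $e(G)>\frac{(k-1)n}{2}$, since $R$ lives inside a single longest path while the edge excess may be spread over the whole graph (e.g.\ over many components, or over dense pieces far from the chosen path). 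Choosing $P$ to maximize $d_G(x_0)+d_G(x_\ell)$ does not obviously transfer a degree bound from the endpoints to all of $R$ either, and no mechanism is given for turning the local information about $R$ into a global edge count. As it stands this is a genuine gap at the crux of the argument, not a routine verification.

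For comparison, the paper closes exactly this gap by a global amortization rather than a rotation argument (Lemma~\ref{LEM: C}): assuming no cycle of length at least $\frac k2$ carries a vertex of degree at least $k-1$, it extracts pairwise disjoint maximal sets $H(C_1),\dots,H(C_m)\in\mathcal S(G)$ around the long cycles of $G$, shows each pair of vertices of an $H(C_i)$ has no common neighbour outside and that distinct $H(C_i)$'s are joined by at most one edge, bounds $e(G[H(C_i)])<\frac{(k-1)(|H(C_i)|-1)}2$ using the degree hypothesis inside each piece, contracts each $H(C_i)$ to a single vertex, and verifies that the quotient graph still lies in $\mathcal D_{|V(G')|,k}$; re-applying Erd\H{o}s--Gallai to the quotient then produces a long cycle of $G$ not contained in any $H(C_i)$, a contradiction. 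If you want to salvage your route, you would need either to prove your rotation claim (which seems genuinely hard, and may require exactly this kind of decomposition anyway) or to import a lemma of this strength; without it the proposal does not constitute a proof.
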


Combining with the Erd\H{o}s-Gallai thoerem, (iii) given by Sidorenko, we have the following Erd\H{o}s-Gallai-type theorem.
\begin{thm}\label{THM: Keyring}
Every graph of order $n$ and size more than $\frac{(k-1)n}{2}$ contains a keyring  of size at least $k$ and with $r$ leaves for $0\le r\le k-3$.
\end{thm}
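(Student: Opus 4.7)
Theorem~\ref{THM: Keyring} is obtained by merging two complementary ranges of $r$. For $0 \le r \le (k-1)/2$ it is Sidorenko's theorem, quoted as item~(iii) in the introduction, while for $k/2 \le r \le k-3$ it is Theorem~\ref{THM: Main}. Every integer $r \in \{0,1,\ldots,k-3\}$ lies in one of these two ranges, so Theorem~\ref{THM: Keyring} follows at once, and the substantive work is Theorem~\ref{THM: Main}.

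My plan for Theorem~\ref{THM: Main} is to read the keyring off a suitably long path ending at a high-degree vertex. The hypothesis $e(G) > (k-1)n/2$ gives $\sum_u d_G(u) > (k-1)n$, so some vertex $v$ satisfies $d_G(v) \ge k$. The key step is to produce a path $P = v u_1 u_2 \cdots u_\ell$ in $G$ with $v$ as an endpoint and $N_G(v) \subseteq \{u_1, \ldots, u_\ell\}$; this forces $\ell \ge d_G(v) \ge k$.

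Granting such a pair $(v, P)$, the keyring is produced directly. Write $d := d_G(v) \ge k$ and list the indices on $P$ of the neighbors of $v$ as $1 \le i_1 < i_2 < \cdots < i_d \le \ell$. Being distinct positive integers they satisfy $i_j \ge j$, and since $r \le k-3$ one has $d - r \ge 3$, so $i_{d-r}$ is defined and $i_{d-r} \ge d - r \ge k - r$. The cycle $C = v u_1 u_2 \cdots u_{i_{d-r}} v$ has length $i_{d-r} + 1 \ge k - r + 1$, and the $r$ remaining neighbors $u_{i_{d-r+1}}, \ldots, u_{i_d}$, whose $P$-indices all exceed $i_{d-r}$ and therefore lie outside $V(C)$, can be attached to $v$ as pendant leaves. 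The result is a keyring with exactly $r$ leaves and at least $(k - r + 1) + r = k + 1 \ge k$ edges.

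The step I expect to require genuine work is locating a pair $(v, P)$ with $N_G(v) \subseteq V(P)$. A naive longest path starting at $v$ need not contain all of $N_G(v)$: if a neighbor $w$ of $v$ lies off $P$, prepending $w$ produces a longer path but one on which $v$ is interior rather than at the endpoint. I would handle this by combining the standard reduction to $\delta(G) \ge \lceil k/2 \rceil$ (iteratively delete vertices of degree $\le (k-1)/2$ and induct on $n$) with a P\'osa-style rotation at the $v$-end: whenever some neighbor of $v$ lies off the current path, a rotation swaps it in while monotonically increasing either $\ell$ or $|N_G(v) \cap V(P)|$, and the process terminates with $N_G(v) \subseteq V(P)$.
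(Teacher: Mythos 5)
Your reduction of Theorem~\ref{THM: Keyring} to Sidorenko's theorem for $0\le r\le\frac{k-1}{2}$ plus Theorem~\ref{THM: Main} for $\frac{k}{2}\le r\le k-3$ is exactly what the paper does, and your final keyring construction from a pair $(v,P)$ would be sound if such a pair existed. The gap is precisely in the step you defer to the end: the claim that some vertex $v$ with $d_G(v)\ge k$ admits a path $P$ starting at $v$ with $N_G(v)\subseteq V(P)$ is false, and no combination of degree reduction and P\'osa rotation can produce an object that does not exist. Let $G$ consist of $k$ pairwise disjoint copies $Q_1,\dots,Q_k$ of $K_k$ together with one extra vertex $v$ joined to a single vertex $w_i$ of each $Q_i$. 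Then $n=k^2+1$ and $e(G)=k\binom{k}{2}+k=\frac{(k-1)n}{2}+\frac{k+1}{2}>\frac{(k-1)n}{2}$, and the minimum degree is $k-1$, so your deletion step removes nothing. The only vertices of degree at least $k$ are $v$ and the $w_i$. Each $Q_i$ meets the rest of $G$ only through the single edge $vw_i$, so every path starting at $v$ enters one $Q_i$ and never leaves it, hence contains only one neighbour of $v$; and every path starting at $w_i$ either stays inside $Q_i$ forever (missing $v\in N_G(w_i)$) or moves to $v$ at its first step and is then trapped in some other $Q_j$ (missing all $k-1$ neighbours of $w_i$ inside $Q_i$). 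So no vertex of degree at least $k$ has even $k$ of its neighbours on a path emanating from it, and your construction cannot start. (The theorem of course still holds here: a cycle of length $k-r$ inside one $Q_i$ with the remaining $r$ clique vertices attached as leaves is the desired keyring.)

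The paper escapes this trap by proving a weaker, correctly calibrated structural statement, Lemma~\ref{LEM: C}: every $G\in\mathcal{D}_{n,k}$ contains a cycle of length at least $\frac{k}{2}$ one of whose vertices $u$ satisfies $d_G(u)\ge k-1$. There is no demand that all of $N_G(u)$ lie on one path or on the cycle. This is enough because in the regime $r\ge\frac{k}{2}$ the $r$ pendant edges already supply half of the $k$ required edges, so a cycle of length $\frac{k}{2}$ suffices; the leaves are then taken from $N_G(u)$ off the cycle and, if those run out, from $N_G(u)$ on the cycle after shortening it along a chord $uu_{r-s+1}$. Proving that lemma is the real work of the paper: it partitions the cycle-rich part of $G$ into maximal sets $H(C_i)$ with strong internal path and cycle properties (Propositions~\ref{F1} and~\ref{F2}), contracts each $H(C_i)$ to a point, and applies the Erd\H{o}s--Gallai theorem to the contracted graph to derive a contradiction. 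Any repair of your argument would need a structural input of at least this strength in place of the path claim.
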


In Section 2, we give a structural lemma for every member of $\mathcal{D}_{n,k}$ and the proof of Theorem~\ref{THM: Main}. Some remarks will be given in Section 3. 

\section{Proof of Theorem~\ref{THM: Main}}
We define a class  $\mathcal{S}(G)$ of sets of vertices. A subset $X$ of $V(G)$ is in $\mathcal{S}(G)$ if and only if $X$ satisfies the following  properties:

\noindent{\bf (a)}: For any vertex $u\in X$, $G[X]$ contains a cycle containing $u$ of length at least $\frac{k}{2}$.

\noindent{\bf (b)}: For any two distinct vertices $u, v\in X$, $G[X]$ contains a path of length at least $\frac{k}{2}$ connecting $u$ and $v$.

For $\mathcal{S}(G)$, we have the following proposition.
\begin{prop}\label{F1}
(i) If $X, Y\in\mathcal{S}(G)$ and $X\cap{Y}\not=\emptyset$ then $X\cup Y\in\mathcal{S}(G)$.

(ii) If $X\in\mathcal{S}(G)$ and $P$ is a path in $G$ with both end vertices in $X$ then $X\cup V(P)\in\mathcal{S}(G)$.

(iii) If $C$ is a cycle of length at least $k$ then $V(C)\in\mathcal{S}(G)$.
\end{prop}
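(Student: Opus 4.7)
The plan is to verify the three items in turn. Parts (i) and (ii) share the same core trick: when I try to build a long path or cycle by concatenating a piece in $G[X]$ with a piece in $G[Y]$ (or with a subpath of $P$), I will always truncate at the \emph{first} common vertex, so that the two pieces meet at exactly one vertex and the concatenation is an honest path or cycle rather than just a walk.

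Part (iii) is essentially immediate: given a cycle $C$ of length at least $k$, condition (a) holds because $C$ itself is a cycle of length $\ge k\ge k/2$ through every vertex of $C$, and condition (b) holds because the two $u$--$v$ arcs of $C$ have lengths summing to $|C|\ge k$, so at least one of them has length at least $k/2$.

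For part (i), condition (a) is trivial: any $u\in X\cup Y$ lies in $X$ or in $Y$, and the cycle guaranteed by condition (a) for the appropriate set still lies in $G[X\cup Y]$. The only nontrivial case of condition (b) is $u\in X\setminus Y$ and $v\in Y\setminus X$. I pick any $w\in X\cap Y$ and apply condition (b) for $X$ to obtain a path $P_1$ from $u$ to $w$ in $G[X]$ of length at least $k/2$. Let $z$ be the first vertex of $P_1$ (traversed from $u$) that lies in $Y$; such a $z$ exists because $w\in Y$. By construction, the subpath $P_1'$ of $P_1$ from $u$ to $z$ meets $Y$ only at $z$. Applying condition (b) for $Y$ to the pair $z,v$ then yields a path $P_2'$ in $G[Y]$ of length at least $k/2$. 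Since $V(P_1')\cap V(P_2')\subseteq V(P_1')\cap Y=\{z\}$, the concatenation $P_1'\cdot P_2'$ is a simple path in $G[X\cup Y]$ from $u$ to $v$ of length at least $k/2$.

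Part (ii) follows the same philosophy, with subpaths of $P$ playing the role of pieces of $G[Y]$. Writing the endpoints of $P$ as $x,y\in X$, for any $u\in V(P)\setminus X$ I let $x'$ (resp.\ $y'$) be the first vertex of $X$ encountered on $P$ when walking from $u$ toward $x$ (resp.\ $y$); these exist and satisfy $x'\ne y'$ because they lie on opposite sides of $u$ on the simple path $P$. For condition (a), I glue the two $P$-subpaths from $u$ to $x'$ and from $u$ to $y'$ to a path of length at least $k/2$ in $G[X]$ from $x'$ to $y'$ (given by condition (b) for $X$); the internal vertices of the two $P$-subpaths lie outside $X$, so the three pieces intersect only at the prescribed endpoints and form a cycle through $u$ of length at least $k/2+2$. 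Condition (b) is handled in the same spirit by splitting on how many of $u,v$ lie in $X$, using the first-$X$-vertex trick along $P$ to reduce each case to a path-finding problem inside $G[X]$; when one of $u,v$ already lies in $X$, the inequality $x'\ne y'$ lets me choose an $X$-side anchor distinct from that vertex so that condition (b) for $X$ applies.

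The main obstacle --- and really the only subtle point --- is ensuring that the various concatenations are simple paths or cycles rather than walks. The ``first vertex in the other set'' trick handles this cleanly: by definition it forces the two segments to share exactly one vertex, which is precisely what is needed to splice them into a single path or cycle.
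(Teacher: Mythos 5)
Your proposal is correct and follows essentially the same route as the paper's proof: part (iii) by inspecting the two arcs of $C$, part (i) by truncating the connecting path at the first vertex lying in $Y$, and part (ii) by anchoring each vertex of $V(P)\setminus X$ at the nearest $X$-vertices on either side of it along $P$. The only detail you leave implicit is condition (b) in (ii) when both $u,v\in V(P)\setminus X$, where (as the paper does) one should take the two \emph{outer} anchors --- the leftward anchor of the left vertex and the rightward anchor of the right vertex --- so that the two $P$-segments are automatically disjoint; this is routine given your setup.
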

\begin{proof}
(i) The property (a) is apparently true. Now let $u,v\in X\cup Y$. If $u$ and $v$ are both in $X$ or both in $Y$, then $G[X]$ or $G[Y]$ contains a path  of length at least $\frac{k}{2}$ connecting $u$ and $v$ and so does $G[X\cup{Y}]$. So, without loss of generality, assume that $u\in{X\setminus{(X\cap Y)}}$ and $v\in Y\setminus(X\cap Y)$. Choose $P_{u}$ be a path in $G[X]$ connecting $u$ and a vertex $w\in X\cap Y$ such that $V(P_{u})\cap Y=\{w\}$, this can be done since $G[X]$ is connected. Next, let $P_{v}$ be a path connecting $w$ and $v$ in $G[Y]$ of length at least $\frac k2$. Thus $V(P_{u})\cap V(P_{v})=\{w\}$ and so the path $P=P_{u}\cup P_{v}$ is a path of length at least $\frac{k}{2}$ connecting $u$ and $v$ in $G[X\cup Y]$.

 (ii) Suppose $P=x_{0}x_{1}\ldots x_{k}$ is a path of length $k$. We say $x_{i}$ is on the left of $x_{j}$ if $i<j$, and on the right otherwise. For the set of vertices $X$ and path $P=x_{0}x_{1}\ldots x_{k}$ with $x_0, x_{k}\in X$ and an internal vertex $u=x_m$ of $P$, denote by $i(u)$ the largest $i<m$ with $x_{i}\in X$, and by $j(u)$ the smallest $j>m$ with $x_{j}\in X$. Apparently $i(u)$ and $j(u)$ exist and $i(u)<j(u)$. Denote $\ell(u)=x_{i(u)}$ and $r(u)=x_{j(u)}$. Then the two segments $P_{\ell}$ from $\ell(u)$ to $u$ and $P_{r}$ from $u$ to $r(u)$ on $P$ are internal vertex disjoint and have no internal vertex in $X$.

Choose $u\in X\cup V(P)$. If $u\in{X}$ then there is a cycle of length at least $\frac{k}{2}$ in $G[X]$ containing $u$. We are done. So suppose that $u\in V(P)$ but $u\not\in{X}$. Then $u$ is an internal vertex of $P$ and so $\ell(u)$ and $r(u)$ are two different vertices in $X$. So we can find a path $P'$ connecting $\ell(u)$ and $r(u)$ in $G[X]$ of length at least $\frac{k}{2}$. Combining with the two segments $P_{\ell}$ and $P_{r}$ on $P$, we get a desired cycle in $G[X\cup V(P)]$ through $u$.

Choose $u,v\in X\cup V(P)$. If both $u$ and $v$ are in $X$ then $G[X]$ contains a path of length at least $\frac{k}{2}$ connecting $u$ and $v$ and we are done. If,   without loss of generality,  $v\in X$ and $u$ is an internal vertex of $P$, then $\ell(u), r(u)\in X$ and at least one of them is different to $v$, say, $\ell(u)\not=v$. Thus there is a path $P_{v}$  of length at least $\frac{k}{2}$ connecting $v$ and $\ell(u)$ in $G[X]$. Combining with the segment $P_{\ell}$ on $P$ connecting $\ell(u)$ and $u$, we get a path of length at least $\frac{k}{2}$ connecting $u$ and $v$ in $G[X\cup V(P)]$.
Now assume neither $u$ nor $v$ are in $X$. Then they are both internal vertices of $P$.  Assume that $u$ is on the left of $v$. Then $\ell(u), r(v)\in X$ and $\ell(u)$ is on the left of $r(v)$ too. So $G[X]$ contains a path $P'$ connecting $\ell(u)$ and $r(v)$ of length at least $\frac{k}{2}$. Together with the two segments $P_u$ connecting $\ell(u)$ and $u$ and $P_v$ connecting $r(v)$ and $v$, we get a desired path in $G[X\cup V(P)]$ connecting $u$ and $v$.

(iii) It can be checked directly.
\end{proof}

Let $C$ be a cycle in $G$ of length at least $k$. Then $V(C)\in\mathcal{S}(G)$ by (iii) of Proposition~\ref{F1}.
Define $H(C)$ be the maximal set in $\mathcal{S}(G)$ containing $V(C)$.

\begin{prop}\label{F2}
(1) For a cycle $C$ of length at least $k$, $|H(C)|\ge k$ and every pair of vertices in $H(C)$ do not have common neighbours out of $H(C)$.

(2) If $C_{1}$  and $C_{2}$ are two cycles in $G$ with length at least $k$, then either $H(C_{1})=H(C_{2})$ or $H(C_{1})\cap H(C_{2})=\emptyset$. Moreover, if $H(C_{1})\cap H(C_{2})=\emptyset$ then $|E(H(C_1), H(C_2))|\le 1$.

(3) For a cycle $C$ of length at least $k$, $H(C)$ is well defined.

\end{prop}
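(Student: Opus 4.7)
The plan is to use Proposition~\ref{F1} repeatedly, combined with the maximality built into the definition of $H(C)$. Since (3) is the well-definedness statement needed to even speak of ``$H(C)$'', I would dispose of it first. By Proposition~\ref{F1}(iii), $V(C)\in\mathcal{S}(G)$, so the family $\mathcal{F}_C=\{X\in\mathcal{S}(G):V(C)\subseteq X\}$ is nonempty. Any two members of $\mathcal{F}_C$ share $V(C)$, so by Proposition~\ref{F1}(i) their union again lies in $\mathcal{F}_C$. Since $V(G)$ is finite, the union of all sets in $\mathcal{F}_C$ is itself a member of $\mathcal{F}_C$ and is the unique maximal element, which is the well-defined $H(C)$.

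For (1), the bound $|H(C)|\ge k$ is immediate from $V(C)\subseteq H(C)$ and $|V(C)|\ge k$. For the ``no common neighbour outside $H(C)$'' claim, I would argue by contradiction: if distinct $u,v\in H(C)$ shared a neighbour $w\notin H(C)$, then $P=uwv$ is a path with both endpoints in $H(C)$, and Proposition~\ref{F1}(ii) yields $H(C)\cup\{w\}\in\mathcal{S}(G)$, strictly larger than $H(C)$, contradicting maximality.

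For (2), I would first note that if $H(C_1)\cap H(C_2)\neq\emptyset$, then Proposition~\ref{F1}(i) gives $H(C_1)\cup H(C_2)\in\mathcal{S}(G)$, and since this set contains both $V(C_1)$ and $V(C_2)$, maximality forces it to coincide with both $H(C_1)$ and $H(C_2)$. For the edge bound in the disjoint case, I would assume, for contradiction, that there exist two distinct edges $u_1v_1,u_2v_2$ with $u_i\in H(C_1)$ and $v_i\in H(C_2)$. If $u_1=u_2$, then $v_1$ and $v_2$ are distinct vertices of $H(C_2)$ with a common neighbour outside $H(C_2)$, contradicting (1) applied to $H(C_2)$; the case $v_1=v_2$ is symmetric. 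If all four vertices are distinct, property (b) applied to $H(C_2)$ supplies a path $P_v\subseteq G[H(C_2)]$ from $v_1$ to $v_2$, and then $u_1v_1P_vv_2u_2$ is a path with both endpoints in $H(C_1)$. Proposition~\ref{F1}(ii) now produces a member of $\mathcal{S}(G)$ containing $H(C_1)$ together with $v_1\notin H(C_1)$, contradicting maximality of $H(C_1)$.

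The main obstacle I anticipate is the edge bound in (2): it is crucial that the path I append really introduces a vertex not in $H(C_1)$, and that each of the three subcases (coincident endpoint on the $H(C_1)$-side, coincident endpoint on the $H(C_2)$-side, and all four vertices distinct) is reduced either to (1) or to Proposition~\ref{F1}(ii) without circularity. The remaining pieces are essentially direct applications of Proposition~\ref{F1}.
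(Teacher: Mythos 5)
Your proposal is correct and follows essentially the same route as the paper: maximality of $H(C)$ combined with parts (i) and (ii) of Proposition~\ref{F1}, with the two-edge case reduced to a path $u_1v_1P_vv_2u_2$ between vertices of $H(C_1)$ after ruling out coincident endpoints via part (1). Your direct treatment of (3) (taking the union of all members of $\mathcal{S}(G)$ containing $V(C)$) is in fact a slightly cleaner justification of well-definedness than the paper's remark that it ``follows from (2)'', but it rests on the same application of Proposition~\ref{F1}(i).
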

\begin{proof}
(1) Clearly, $|H(C)|\ge |V(C)|\ge k$. If there are two vertices $x,y\in{H(C)}$ sharing  a common neighbour $z$ out of $H(C)$, then the path $xzy$ has both end vertices in $H(C)$. By (ii) of Proposition~\ref{F1}, $H(C)\cup \{x,y,z\}\in \mathcal{S}(G)$, a contradiction to the maximality of $H(C)$.

(2) If $H(C_{1})\cap H(C_{2})\not=\emptyset$, according to (i) of Proposition~\ref{F1}, $H(C_{1})\cup H(C_{2})\in\mathcal{S}(G)$. By the maximality of $H(C_{1})$ and $H(C_{2})$, we have $H(C_{1})=H(C_{1})\cup H(C_{2})=H(C_{2})$. If $H(C_{1})\cap H(C_{2})=\emptyset$ and there are two edges $x_{1}y_{1}, x
_{2}y_{2}\in E(H(C_1), H(C_2))$ with $x_{1}, x_{2}\in H(C_{1})$ and $y_{1}, y_{2}\in H(C_{2})$. By (1), $x_{1}\not=x_{2}$ and $y_{1}\not=y_{2}$.
Then there is a path $P$ in $G[H(C_{2})]$ connecting $y_{1}$ and $y_{2}$. So $x_{1}y_{1}Py_{2}x_{2}$ is a path with both end vertices in $H(C_{1})$. By (ii) of Proposition~\ref{F1}, $H(C_{1})\cup V(P)\subseteq \mathcal{S}(G)$, again a contradiction to the  maximality of $H(C_1)$.

(3) It follows directly from (2).

\end{proof}

The following is the key lemma in the proof of Theorem~\ref{THM: Main}.

\begin{lem}\label{LEM: C}
Every graph $G\in\mathcal{D}_{n,k}$ contains a cycle of length at least $\frac{k}{2}$ such that one of its vertices has degree at least $k-1$.
\end{lem}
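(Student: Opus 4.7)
I will proceed by induction on $n$. If there is a vertex $v \in V(G)$ with $d_G(v) \le (k-1)/2$, then
\[ e(G-v) = e(G) - d_G(v) > \frac{(k-1)n}{2} - \frac{k-1}{2} = \frac{(k-1)(n-1)}{2}, \]
so the inductive hypothesis applied to $G - v$ produces a cycle of length $\ge k/2$ with a vertex of degree $\ge k-1$ in $G - v$; the same cycle and vertex certify the statement for $G$ since degrees only increase when we pass back to $G$. We may therefore assume $\delta(G) \ge \lceil k/2 \rceil$.

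The density condition $e(G) > (k-1)n/2$ forces the average degree to strictly exceed $k-1$, so there is a vertex $v_0$ with $d_G(v_0) \ge k$. My goal is then to exhibit a cycle of length at least $k/2$ through $v_0$. I would take a longest path $P = v_0 u_1 u_2 \cdots u_p$ in $G$ with $v_0$ as an endpoint. Maximality of $P$ gives $N_G(u_p) \subseteq V(P)$ and hence $p \ge \delta(G) \ge \lceil k/2 \rceil$. If in addition $N_G(v_0) \subseteq V(P)$, then among the $\ge k$ neighbors of $v_0$ on $P$ the largest index $j$ satisfies $j \ge k$, and the cycle $v_0 u_1 \cdots u_j v_0$ of length $j+1 \ge k+1$ contains $v_0$ (with $d_G(v_0) \ge k \ge k-1$), completing the argument.

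The main obstacle is the remaining case in which $v_0$ has a neighbor off $P$. Here I would switch to a longest path in all of $G$ (allowing $v_0$ to be interior) and apply P\'osa-style rotations: every endpoint of every rotated longest path has all its $G$-neighbors contained in the common vertex set of the family, so if some rotation-reachable endpoint has degree $\ge k-1$ then the preceding cycle argument applied to that rotated path completes the proof. Guaranteeing such an endpoint is the crux; I expect this to follow either from a P\'osa-style expansion of the endpoint set together with the degree-sum inequality $2e(G) > (k-1)n$, or alternatively by passing to a $2$-connected block containing $v_0$ and invoking the Dirac-type statement that, in a $2$-connected graph with sufficiently large minimum degree, every vertex lies on a cycle of length at least $k/2$.
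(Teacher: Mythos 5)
Your opening reduction is sound: deleting a vertex of degree at most $(k-1)/2$ preserves the density condition, so you may assume $\delta(G)\ge\lceil k/2\rceil$, and the average degree then yields a vertex $v_0$ with $d_G(v_0)\ge k$; the case where all neighbours of $v_0$ lie on a longest path starting at $v_0$ is also handled correctly. But the remaining case is not a technical loose end to be patched later --- it is the entire content of the lemma, and the hypotheses you have actually secured at that point, namely $\delta(G)\ge\lceil k/2\rceil$ together with one vertex of degree $\ge k$, do not imply the conclusion. Take $k=100$ and let $v_0$ be joined to $50$ pairs $\{a_i,b_i\}$ with $a_ib_i$ an edge, where each of $a_i,b_i$ is additionally a cut vertex attached to its own private $K_{51}$. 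Then $\delta(G)=50=k/2$ and $d(v_0)=100\ge k$, yet every cycle through $v_0$ is a triangle $v_0a_ib_iv_0$, and $v_0$ is the only vertex of degree $\ge k-1$. This graph of course violates $e(G)>(k-1)n/2$ (its average degree is about $k/2$), so it is no counterexample to the lemma; but it shows that any completion of your argument must re-invoke the global edge count in an essential way, not merely through the minimum degree and the existence of $v_0$. It also shows that the lemma cannot be proved by insisting on a long cycle through your chosen high-degree vertex: such a cycle may simply fail to exist under your reduced hypotheses.

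Both of your candidate repairs run into exactly this obstacle. P\'osa rotation controls $N(S)$ for the set $S$ of reachable endpoints, but nothing forces $S$ to contain a vertex of degree $\ge k-1$; in the example above every rotation endpoint of a longest path sits deep in one of the private cliques and has degree about $k/2$. The block-decomposition fallback fails because minimum degree is not inherited by blocks: in the example the blocks containing $v_0$ are triangles, so no Dirac-type theorem applies to them. The paper avoids all of this with a global argument of a quite different flavour: assuming no qualified cycle exists, it repeatedly extracts maximal sets $H(C_i)\in\mathcal{S}(G)$ around cycles of length at least $k$ (every vertex of such a set lies on a cycle of length at least $k/2$ inside it, hence would have degree less than $k-1$), shows each $H(C_i)$ spans strictly fewer than $\frac{(k-1)(|H(C_i)|-1)}{2}$ edges and sends at most one edge to each other part, contracts each $H(C_i)$ to a single vertex, checks that the contracted graph still lies in $\mathcal{D}_{n',k}$, and then uses Erd\H{o}s--Gallai to produce a long cycle that cannot be absorbed into any single $H(C_i)$ --- a contradiction. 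Some global step of this kind is what your sketch is missing.
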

\begin{proof}
Suppose to the contrary that there is a graph $G\in\mathcal{D}_{n,k}$ containing no cycle of qualified length and vertex degree.
We do operations on $G$ as follows: Let $G_0=G$. For $i\ge 1$, if $G_{i-1}$ contains a cycle $C_{i}$ of length at least $k$ then set $G_{i}=G_{i-1}-H(C_{i})$.
The procedure stops at step $m$ if $G_m$ contains no cycle of length at least $k$. By Theorem~\ref{THM: EG}, $G_0$ contains a cycle of length at least $k$. So $m\ge 1$.
By (2) of Proposition~\ref{F2},
$H(C_{1}), H(C_{2}),\ldots,H(C_{m})$ are pairwise disjoint and $G_{m}=G-H(C_{1})-H(C_{2})-\ldots-H(C_{m})$.
\begin{claim}\label{F3}
(I) Any vertex in $H(C_{i})$ has degree less than $k-1$.

(II) Any cycle of length at least $k$ in $G$ must be contained in some $G[H(C_{i})]$.
\end{claim}.
(I) For any vertex $u\in H(C_{i})$, there is a cycle in $G[H(C_{i})]$ containing $u$ of length at least $\frac{k}{2}$. If $d_{G}(u)\geq{k-1}$ then $C_i$ is a qualified cycle in $G$. It is a contradiction to $G$ is a counterexample.

 (II) Let $C$ be a cycle in $G$ of length at least $k$.  If $V(C)$ is disjoint with $H(C_{i})$ for any $i\in \{1,2,\ldots, m\}$, then $C$ must be a cycle of length at least $k$ in $G_{m}$, a contradiction the definition of $G_{m}$. So $V(C)\cap H(C_{i})\not=\emptyset$ for some $i\in\{1,2,\ldots, m\}$. If $V(C)\not\subset{H(C_{i})}$ then $H(C_{i})\cup{V(C)}\in\mathcal{S}(G)$, which is bigger than $H(C_i)$. This is a contradiction to the maximality of $H({C_i})$ containing $C_i$ in $\mathcal{S}(G)$.

Suppose that $V(G_{m})=\{y_{1},y_{2},\dots,y_{s}\}$. We construct a new graph $G'$ as below.
$$V(G')=\{x_{1},x_{2},\dots,x_{m},y_{1},y_{2},\dots,y_{s}\}, \mbox{ and } E(G')=E_{1}\cup E_{2}\cup E(G_{m}),$$
where
$$E_{1}=\{x_{i}x_{j} : i\not=j, e_{G}(H(C_{i}),H(C_{j}))>0\} \mbox{ and } E_{2}=\{x_{i}y_{j} : e_{G}(H(C_{i}),y_{j})>0\}.$$
By (1) and (2) of Proposition~\ref{F2}, $e_{G}(H(C_{i}), y_{j})\le 1$ and $e_{G}(H(C_{i}), H(C_{j}))\le 1$ for $i\not=j$.
So
\begin{eqnarray*}
|E_{1}|
&=&\sum_{1\le i<j\le m} e_{G}(H(C_{i}), H(C_{j}))\\ 
&=& e\left(G\left[\bigcup_{i=1}^{m}H(C_{i})\right]\right)-\sum_{i=1}^{m}e(G[H(C_{i})]),
\end{eqnarray*}
and
\begin{eqnarray*}
|E_{2}|
&=&\sum_{i=1}^{m}\sum_{j=1}^{s}e_{G}(H(C_{i}),y_{j})=\sum_{i=1}^{m}e_{G}(H(C_{i}),V(G_{m}))\\
&=&e_G\left(\bigcup_{i=1}^{m}H(C_{i}),V(G_{m})\right).
\end{eqnarray*}
By (I) of Claim~\ref{F3},
\begin{eqnarray*}
e(G[H(C_{i})])&\le &\frac 12\sum_{x\in H(C_{i})}d_{G}(x)\leq\frac{(k-2)|H(C_{i})|}{2}\\
&=&\frac{(k-1)(|H(C_{i})|-1)+(k-1)-|H(C_{i})|}{2}\\
&<&\frac{(k-1)(|H(C_{i})|-1)}{2} \,\, (\mbox{since $|H(C_i)|\ge k$}).
\end{eqnarray*}
Therefore,
\begin{eqnarray*}
e(G')&=&|E_{1}|+|E_{2}|+e(G_{m})\\
&=&e\left(G\left[\bigcup_{i=1}^{m}H(C_{i})\right]\right)+e_G\left(\bigcup_{i=1}^{m}H(C_{i}),V(G_{m})\right)+e(G_{m})-\sum_{i=1}^{m}e(G[H(C_{i})])\\
&=&e(G)-\sum_{i=1}^{m}e(G[H(C_{i})])\\
&>&\frac{n(k-1)}{2}-\sum_{i=1}^{m}\frac{(k-1)(|H(C_{i}|-1)}{2}\\
&=&\frac{k-1}{2}\left(\left(n-\sum_{i=1}^{m}|H(C_{i})|\right)+m\right)\\
&=&\frac{(k-1)(s+m)}{2}=\frac{(k-1)|V(G')|}{2}.
\end{eqnarray*}
We conclude that $G'\in\mathcal{D}_{|V(G')|, k}$. By Theorem~\ref{THM: EG}, $G'$ contains a cycle $C'$ of length at least $k$. Note that $H(C_1), H(C_2),\ldots, H(C_m)$ are pairwise disjoint. We can find a cycle $C$ in $G$ by replacing each $x_{i}$ contained in $C'$ by a path (possibly be a vertex) in $G[H(C_{i})]$. Then $C$ is a cycle of length at least $k$ in $G$. But $C$ is not contained in any $G[H(C_{i})]$, a contradiction to (II) of Claim~\ref{F3}.
\end{proof}

Now we are ready to give the proof of Theorem~\ref{THM: Main}.

\begin{proof}[\bf Proof of Theorem~\ref{THM: Main}]
Since $G\in\mathcal{D}_{n,k}$, $G$ contains a cycle $C$ of length at least $\frac k2$ such that there is a vertex $u\in V(C)$ of degree at least $k-1$. Assume that $|N_{G}(u)\cap V(C)|=t$ and $s=d_{G}(u)-t$. Then $s$ is the number of neighbours of $u$ out of $V(C)$. Now we label the neighbours of $u$ in $V(C)$ starting from $u$ along $C$ in clockwise, the first neighbour of $u$ is $u_{1}$, the second one is $u_{2}$, $\ldots$, the last one is $u_{t}$. The neighbours of $u$ out of $C$ are $v_{1}, v_{2},\ldots, v_{s}$.

If $r\leq s$ then we find a keyring $K$ from $C$ by appending vertices $v_{1}, v_{2},\ldots, v_{r}$  to $u$ as leaves. Clearly, $K$ contains $r$ leaves and  at least $\frac k2+r\geq k$ edges because $r\ge \frac k2$.
If $r>s$ then $r-s+1<t$ since $r\leq k-3$. Let $P$ be the segment on $C$ with end vertices $u_{r-s+1}$ and $u$ containing all $u_{i}$ with $r-t+1\leq i\leq t$. 
Appending the edge $uu_{r-s+1}$ to $P$ we get a cycle and then appending $r$ leaves $v_{1}, \ldots, v_{s}, u_{1}, u_{2},\ldots, u_{r-s}$ to the vertex $u$ of the cycle we obtain a keyring $K$ in $G$ with $r$ leaves.  Note that $K$ contains at least $k$ vertices and so $K$ contains at least $k$ edges.

\end{proof}

\section{Remarks}
Tur\'an asked the very problems for which the Erd\H{o}s-Gallai theorem provides the answers.
Erd\H{o}s-Gallai theorem solved the problem for cycles; Fan and Sun~\cite{Fan-Sun07} solved the lasso problem; Combining  Sidorenko's theorem and Theorem~\ref{THM: Main}, the keyring problem  is solved completely. Note that {A lasso (keyring) } is a graph obtained from a cycle by appending a path (a star) to one of its vertices. It is reasonable to consider the Tur\'an problem for a graph obtained from a cycle by appending a tree to one of its vertices (we may call it a {\em generalized keyring}). It will be plausible if the Erd\H{o}s-Gallai theorem provides the answer. We leave this as an open problem.


\begin{thebibliography}{99}
\bibitem{EG59}
Erd\H{o}s, P., Gallai, T.: On maximal paths and circuits of graphs. Acta Math. Acad. Sci.Hungar.,{\bf 10}, 337-356 (1959)

\bibitem{ES65}
Erd\H{o}s, P.: Extremal problems in graph theory. In: Fiedler, M. (ed.) Theory of Graphs and its Applications,
pp. 29¨C36. Academic Press, Cambridge (1965)


\bibitem{Fan13}
Fan, G.: The Erd\H{o}s-S\'os conjecture for spiders of large size, Discrete mathematics {\bf 313}, 2513-2517 (2013)

\bibitem{AFC}
Fan, G., Hong, Y., Liu, Q.: The Erd\H{o}s-S\'os conjecture for spiders. arXiv:1804.06567, (2018)

\bibitem{Fan-Sun07}
Fan, G.,  and Sun, L. : The erd\H{o}s-S\'os conjecture for spiders. Discrete
Math. {\bf 307(23)}, 3055-3062, (2007)

\bibitem{Mc05}
McLennan, A.: The erd\H{o}s-S\'os conjecture for trees of diameter four.
J.  Graph Theory {\bf 49(4)}, 291-301, (2005)

\bibitem{AFA5}
Sidorenko, A.: Asymptotic solution for a new class of forbidden $r$-graphs. Combinatorica {\bf 9(2)}, 207-215, (1989)

\bibitem{K2k}
Sidorenko, A.: An Erd\H{o}s-Gallai-type theorem for keyrings. Graphs Combin., {\bf 34}, 633-638 (2018)

\end{thebibliography}
\end{document}